\begin{document}

\title{On the Linear Convergence of Extra-Gradient Methods for Nonconvex-Nonconcave Minimax Problems}
\author{Saeed Hajizadeh\thanks{Department of Mathematics, Statistics, and Computer Science, University of Illinois at Chicago ({mailto: shajiz2@uic.edu})}, Haihao Lu\thanks{The University of Chicago, Booth School of Business ({mailto: haihao.lu@uchicagobooth.com})}, Benjamin Grimmer\thanks{John Hopkins University, Department of Applied Mathematics and Statistics ({mailto: grimmer@jhu.edu})}}

\maketitle

\newtheorem{Def}{Definition}[section]
\newtheorem{Ass}{Assumption}[section]
\newtheorem{Thm}{Theorem}
\newtheorem{Prop}{Proposition}[section]
\newtheorem{Exc}{Exercise}
\newtheorem{Fact}{Fact}
\newtheorem{Obs}{Observation}[section]
\newtheorem{Lem}{Lemma}[section]
\newtheorem{Ex}{Example}[section]
\newtheorem{Cor}{Corollary}
\newtheorem{Rem}{Remark}

\begin{abstract}
    Recently, minimax optimization received renewed focus due to modern applications in machine learning, robust optimization, and reinforcement learning. The scale of these applications naturally leads to the use of first-order methods. However, the nonconvexities and nonconcavities present in these problems, prevents the application of typical Gradient Descent-Ascent, which is known to diverge even in bilinear problems. Recently, it was shown that the Proximal Point Method (PPM) converges linearly for a family of nonconvex-nonconcave problems. In this paper, we study the convergence of a damped version of Extra-Gradient Method (EGM) which avoids potentially costly proximal computations, only relying on gradient evaluation. We show that EGM converges linearly for smooth minimax optimization problem satisfying the same nonconvex-nonconcave condition needed by PPM.
\end{abstract}

\section{Introduction}

Minimax optimization has a long history in optimization in the contexts of duality theory and robust optimization and more broadly in biology, social sciences, and economics \cite{RogerMyerson:GameTheory}. In these classic settings, the objective is typically well-behaved with respect to both the minimizing and maximizing variables (being convex and concave respectively) and strong computational guarantees are known. In the past few years, this problem has seen several applications in machine learning, and, in particular, in areas such as GANs \cite{Goodfellow:GAN}, adversarial training \cite{Madry:AdversarialTraining}, and multi-agent reinforcement learning \cite{OmidShafiei:ReinforcementLearning}. However, these modern applications do not possess the same convexity and concavity structure that is foundational to the traditional analysis of minimax optimization.
Formally, we consider unconstrained minimiax optimization problems of the following form
\begin{align} \label{generalproblem}
    \min_{x \in \mathbb{R}^n} \max_{y \in \mathbb{R}^m} L(x , y)\ ,
\end{align}
where $L$ is twice continuously differentiable but need not be convex nor concave in $x$ or $y$.

Due to the large-scale nature of such applications, first-order methods provide a practical, scalable algorithmic approach whereas higher-order methods may be computationally intractable. Most well-known first-order methods are Gradient Descent-Ascent (GDA), the Proximal Point Method (PPM), and the Extra Gradient Method (EGM). The most widely known of these methods is GDA and its iterate update, for solving problem (\ref{generalproblem}), with a step-size $s > 0$ is given by
\begin{align*}
    \begin{bmatrix}
        x_{k + 1} \\
        y_{k + 1}
    \end{bmatrix} = \begin{bmatrix}
        x_k \\
        y_k
    \end{bmatrix} - s \begin{bmatrix}
                         \nabla_x L(x_k , y_k) \\
                         - \nabla_y L(x_k , y_k)
                      \end{bmatrix} \ .
\end{align*}
However, GDA does not converge even in bilinear cases, i.e. the cases where the objective function $L(x , y) = x^T B y$ for some $B \in \mathbb{R}^{n \times m}$, without the assistance of averaging \cite{Daskalakis:LimitPointsNIPS,Lu:O(sr)}.

A more sophisticated algorithm to solve minimax problems, first studied in the seminal work of Rockafellar \cite{Rockafellar:PPM}, is the Proximal-Point Method (PPM). PPM enjoys more favorable theoretical properties compared to other first-order methods. The iteration update for PPM with step-size $s > 0$ is as follows
\begin{align*}
    \begin{bmatrix}
        x_{k + 1} \\
        y_{k + 1}
    \end{bmatrix} = {\prox}_{s.f}(x_{k},y_{k}) :=\argminmax_{u \in \mathbb{R}^n , v \in \mathbb{R}^m} L(u , v) + \frac{1}{2s} \| u - x_k \|^2 - \frac{1}{2s} \| v - y_k \|^2\ .
\end{align*}
Recently, it is shown in \cite{PPM_Landscape} that PPM (potentially with a damping term, see~\eqref{DampedPPMDefinition}) converges linearly to a stationary point of any nonconvex-nonconcave objective function whenever there is sufficient interaction between $x$ and $y$. However, despite these improved convergence guarantees, the proximal point method remains impractical to employ as it requires (approximately) solving the above proximal subproblem at every iteration. 

EGM replaces the proximal step of PPM with two steps in the direction of gradients evaluated at different points, as defined below:
\begin{equation*}
    \begin{aligned}
        \begin{bmatrix}
        x_{k + 1}' \\
        y_{k + 1}'
        \end{bmatrix} & = \begin{bmatrix}
        x_k \\
        y_k
        \end{bmatrix} - s \begin{bmatrix}
        \nabla_x L(x_k , y_k) \\
        - \nabla_y L(x_k , y_k)
        \end{bmatrix} & \\
        \begin{bmatrix}
        x_{k + 1} \\
        y_{k + 1}
        \end{bmatrix} & = \begin{bmatrix}
        x_k \\
        y_k
        \end{bmatrix} - s \begin{bmatrix}
        \nabla_x L\left(x_{k + 1}' , y_{k + 1}'\right) \\
        - \nabla_y L\left(x_{k + 1}' , y_{k + 1}'\right)
        \end{bmatrix} & \ .
    \end{aligned}
\end{equation*}
It is well known that EGM exhibits linear convergence to a saddle point for many convex-concave objective functions \cite{Nemirovski:EGM}. There are no such convergence guarantees for other inexpensive methods, vanilla GDA, which, as stated, is known to diverge even in bilinear case.


In this paper, we show that under similar conditions to PPM, EGM (as well as its generalization incorporating damping) globally converges linearly to a stationary point of a class of nonconvex-nonconcave objective functions. This algorithm is computationally more efficient to implement than PPM, relying only on a gradient oracle. Hence, we find that similar to the convex concave setting of~\cite{Nemirovski:EGM}, EGM provides an efficient replacement for the proximal point method for nonconvex-nonconcave optimization while preserving its strong theoretical guarantees.

The rest of the paper is organized as follows. In section \ref{Literature}, the relevant literature is provided. In Section \ref{Notation}, the notation will be laid out for the rest of the paper. In section \ref{PreliminarySection}, the preliminaries and the required definitions are furnished. In Section \ref{EGMApproximationOfPPM}, we will present and explain our damped generalization of EGM and show that it approximates a damped variant of PPM. Section \ref{MainResult} formalizes and proves our main Theorem and then provides a nonconvex-nonconcave quadratic minimax problem establishing the tightness of our convergence theory.



\subsection{Related Literature} \label{Literature}
Proximal Point Method (PPM) was first introduced by Rockafellar in his seminal work \cite{Rockafellar:PPM} as a method to solve variational inequality problems. Minimax problems are a special class of variational inequalities and the results Rockafellar established in his paper imply the local linear convergence of PPM when applied to (\ref{generalproblem}) granted (i) the solution to (\ref{generalproblem}) is unique, (ii) the mapping $F:\mathbb{R}^{m \times n} \rightarrow \mathbb{R}^{m \times n}$ is invertible around $0$, and (iii) $F^{-1}$ is Lipschitz continuous around $0$. Paul Tseng \cite{Tseng:PPM_EGM} showed the convergence, under some complicated conditions, of PPM and EGM. Nemirovski \cite{Nemirovski:EGM}, in his seminal paper, introduces Mirror Prox algorithm, a generalization of EGM,
and shows that it converges linearly with rate $O(\frac{1}{\epsilon})$ for convex-concave minimax problems. 

There are quite a few works that study a special case of problem (\ref{generalproblem}) with bilinear interaction terms, i.e. when $L(x , y) = f(x) + x^TAy - g(y)$ where $A \in \mathbb{R}^{n \times m}$ is a matrix and $f,g$ are two proper, lower semi-continuous, and convex functions. The most well-known such work is Nesterov's smoothing \cite{Nesterov:NonsmoothMinimization}, Monteiro's Hybrid Proximal Extragradient Method \cite{Monteiroetal:HybridProximalEGM}, Douglas-Rachford Splitting Method \cite{DouglasRachford}, Primal-Dual Hybrid Gradient Method (PDHG) \cite{ChambollePock:PDHGIntro}, and Alternating Direction Method of Multipliers (ADMM) \cite{Jonathan_Bertsekas:OnDouglasRachford}. Recently, O`Connor-Vandenberghe \cite{OConnor:PDHGandDRSplittingEquivalence} showed that PDHG and Douglas-Rachford Splitting Method are equivalent. ADMM and PDHG differ from EGM in the sense that the formers update the primal and dual sequentially while EGM updates the primal and dual iterates simultaneously.

The introduction of General Adversarial Networks (GANs) \cite{Goodfellow:GAN} has shifted a ton of attention towards nonconvex-nonconcave minimax problems. Recently, Grimmer et. al \cite{PPM_Landscape} studied PPM and its global convergence on a class of nonconvex-nonconcave minimax problems that enjoy sufficiently strong interaction between the two underlying agents. Yang et al. \cite{Yangetal:NonconvexNonconcaveMinimaxGlobalConvergence} furnished another example of global convergence of Alternating Gradient Descent-Ascent (AGDA) for the class of objective functions that satisfy the \emph{two-sided Polyak-Łojasiewicz} inequality, known to be a weaker condition than strongly-convex-strongly-concave. \cite{Jin_etal:localOptimalityMinimax} studied the various notions of optimality in nonconvex-nonconcave minimax problems. Diakonikolas et. al \cite{Diakonikolasetal:NonconvexNonconcaveErgodicConvergenceEGM} furnished the proof for the ergodic convergence of a special class of our damped EGM, that is, one with a damping parameter of $\lambda = \frac{1}{2}$, when applied to a nonconvex-nonconcave minimax problems. Lee and Kim \cite{FastEGMLee} furnished the convergence of the so-called \emph{two-time-scale anchored extra-gradient method} (FEG) to a stationary point (Definition \ref{StationaryPoint}) of an objective function with a negatively $\rho$-comonotone oracle. This is also known in literature as $|\rho|$-cohypomonotonicity (see \cite{WalaaMoursi:MonotoneOperators}, Remark $2.5$ (ii)). Each iteration of FEG iterates from a convex combination of the ``current iterate'' $z_k$ and the ``initial point'' $z_0$, and moves along the direction of a linear combination of gradient information in its transition from the mid-point to the next iteration. This was shown to converge sublinearly to a stationary point of an objective function that admits negative comonotonicity. The convergence rate of various implicit and explicit methods for solving nonconvex-nonconcave minimax problems, their convergence rate, are illustrated in Table \ref{ComparisonOfRates}. We note that positive interaction dominance, a slight strengthening of negative comonotonicity, is, to the best of the authors' knowledge, the most general setting for which linear convergence is known in the literature.

\begin{table}
    \centering
    \captionsetup{justification=centering}
    \begin{tabular}{ p{3.5cm}||p{1.2cm}| p{6cm}| p{2cm}| p{1.5cm}  }
        \multicolumn{5}{c}{} \\

         Algorithm & Explicit Method & Setting & Constraints & Convergence Rate  \\
         \hline
        AGDA~\cite{Yangetal:NonconvexNonconcaveMinimaxGlobalConvergence} & \checkmark & two-sided PŁ & \ding{56} & $O\left( \log \left( \frac{1}{\epsilon} \right) \right)$  \\
        PPM~\cite{PPM_Landscape} & \ding{56} & Positive Interaction Dominance  & \ding{56} & $O\left( \log \left( \frac{1}{\epsilon} \right) \right)$ \\
        Damped EGM~\cite{Diakonikolasetal:NonconvexNonconcaveErgodicConvergenceEGM} & \checkmark & weak MVI  & \ding{56} & $O\left( \frac{1}{\epsilon} \right)$ \\
        CEG+~\cite{Pethick:EscapingLimitCycle} & \ding{56} & weak MVI  & \checkmark & $O\left( \frac{1}{\epsilon} \right)$ \\
        EGM Variant~\cite{FastEGMLee} & \checkmark & Negatively Comontone  & \ding{56} & $O\left( \frac{1}{\epsilon} \right)$ \\
        \hline
        Damped EGM[\textbf{This paper}] & \checkmark & Positive Interaction Dominance  & \ding{56} & $O\left( \log \left( \frac{1}{\epsilon} \right) \right)$ \\
         \hline
    \end{tabular}
    \caption{Comparison of the algorithms studied in recent papers on nonconvex-nonconcave minimax optimization. Any method that used a resolvent or proximal step is not considered an explicit method and may thus have an expensive cost per iteration. }
    \label{ComparisonOfRates}
\end{table}



Last but not least, \cite{ZhangNon-convexSmoothGames} provided a unified approach to various local and global optimal notions such as local and global Nash equilibrium as well as local and global minimax point with enlightening examples throughout. \cite{ZhangNon-convexSmoothGames} also furnished two preliminary results on the local linear convergence (asymptotic stability) condition of damped EGM, i.e. with a fixed damping constant, for smooth nonconvex-nonconcave minimax games but furnished no explicit rates.

The literature on convex-concave minimax problems, however, is much richer. Among those, Daskalakis et. al \cite{Daskalakisetal:OGDA} studied Optimistic Gradient Descent Ascent (OGDA) for training GANs, showing that ODGA converges linearly to the unique stationary point of a bilinear objective function with the interaction matrix being nonsingular. Mokhtari et. al \cite{Mokhtari:UnifiedMinimax} furnished a unified study of OGDA, EGM, and PPM, and showed that OGDA and EGM approximate PPM. They also re-proved the concept of linear convergence of EGM, OGDA, and PPM to the unique stationary point of strongly-convex-strongly-concave and full-rank bilinear minimax problems. 

\subsection{Notations and Assumptions} \label{Notation}

Throughout the paper, we use $x$ and $y$ to denote the minimizing and maximizing variables, respectively. We use $I$ to denote identity matrix of appropriate dimension. The symbols $\nabla$, $\nabla^2$, $\nabla_x$, and $\nabla_{xx}^2$ are used to denote the gradient, Hessian, partial gradient, and partial Hessian of a function following the symbol. Let $U \subset \mathbb{R}^n \times \mathbb{R}^m$. We say a mapping $L:\mathbb{R}^n \times \mathbb{R}^m \rightarrow \mathbb{R}$ is $\xi$-Lipschitz on $U$ if for any pair of $z , z' \in U$, $\| L(z) - L(z') \| \leq \xi \| z - z' \|$. We are primarily interested in twice differentiable functions $L$. We say $L$ is $\beta$-smooth on $U$ if its gradient is $\beta$-Lipschitz in $U$, or equivalently, when $L$ is twice continuously differentiable and its Hessian satisfies $\| \nabla^2 L(z) \| \leq \beta$ for all $z \in U$. We say that $L$ is $\mu$-strongly convex-strongly concave on $U$ for $\mu>0$ if for any $z = (x , y) \in U$, $\nabla_{xx}^2 L(z) \succeq \mu I$ and $- \nabla_{yy}^2L(z) \succeq \mu I$. When $\mu = 0$, this is equivalent to convexity and concavity in $x$ and $y$, respectively. However, our primary interest is in nonconvex-nonconcave objectives. We quantify the level of negative curvature in $x$ and positive curvature in $y$ as follows: We say $L$ is $\rho$-weakly convex-weakly concave on $U$ if for all $z = (x , y) \in U$, \[ \nabla_{xx}^2 L(z) \succeq -\rho I,~~~~~~ - \nabla_{yy}^2L(z) \succeq -\rho I\ . \]
Moreover, we denote the first-order oracle for the problem~\eqref{generalproblem} by $ F(z)=\begin{bmatrix}
\nabla_xL(z) \\
- \nabla_yL(z)
\end{bmatrix}$ for any $z\in\mathbb{R}^n\times\mathbb{R}^m$. 
For any $\rho$-weakly-convex-weakly-concave function $L$, we denote the prox operator with stepsize $0<s\leq 1/\rho$ by
\begin{align*}
    (x , y) = {\prox}_{s.f} (u , v) := \argminmax_{u \in \mathbb{R}^n , v \in \mathbb{R}^m} f(u , v) + \frac{1}{2s} \| u - x \|^2 - \frac{1}{2s} \| v - y \|^2 \ .
\end{align*}


For any $0<s\leq 1/\rho$, we say a function $L$ is $\alpha(s)$-interaction dominant in $x$ if for all $z \in \mathbb{R}^{n + m}$, 
\begin{align}
    \nabla_{xx}^2L(z) + \nabla_{xy}^2L(z)\left(s^{-1} I - \nabla_{yy}^2L(z)\right)^{-1}\nabla_{yx}^2L(z) & \succeq \alpha(s) I \label{InteractionDominanceX}
\end{align}
and $\alpha(s)$-interaction dominant in $y$ if for any $z \in \mathbb{R}^{n + m}$
\begin{align}
    -\nabla_{yy}^2L(z) + \nabla_{yx}^2L(z)\left(s^{-1} I + \nabla_{xx}^2L(z)\right)^{-1}\nabla_{xy}^2L(z) & \succeq \alpha(s) I \label{InteractionDominanceY}\ . 
\end{align}

Throughout our analysis of the Extragradient Method, we assume the following regularity conditions on the objective. The first two conditions (Lipschitz continuity, smoothness, and weak convexity-concavity) are relatively standard. The third regularity condition is positive interaction dominance (see Assumption~\ref{Assumptions}) and is equivalent to the settings considered for the proximal point method in~\cite{PPM_Landscape} and the negative comonotone setting where accelerated, sublinear rates were recently derived by~\cite{FastEGMLee}.

\begin{Ass}\label{Assumptions}
    The objective function $L\colon\mathbb{R}^n\times\mathbb{R}^m \rightarrow \mathbb{R}$ satisfies the following four conditions
    \begin{enumerate}
        \item $L$ is continuously twice differentiable and $\beta$-smooth on $\mathbb{R}^n\times\mathbb{R}^m$.
        \item $L$ is $\rho$-weakly convex in $x$ and $\rho$-weakly concave in $y$ on $\mathbb{R}^n\times\mathbb{R}^m$.
        \item For some $s \in \left( 0 , \frac{1}{\rho} \right)$, $L$ satisfies the interaction dominance conditions~\eqref{InteractionDominanceX} and~\eqref{InteractionDominanceY} where $\alpha > 0$ is a positive function.
    \end{enumerate}
    The smoothing constant $\beta$, weak-convexity-weak-concavity constant $\rho$, and a pair of values $s$ and $\alpha$ satisfying interaction dominance are needed to select stepsize parameters where our theory applies.
\end{Ass}

\paragraph{Positive Interaction-Dominance:} Provided $0<s\leq 1/\rho$, the second terms 
\begin{align*}
    \nabla_{xy}^2L(z)\left(s^{-1} I - \nabla_{yy}^2L(z)\right)^{-1}\nabla_{yx}^2L(z) \\
    \nabla_{yx}^2L(z)\left(s^{-1} I + \nabla_{xx}^2L(z)\right)^{-1}\nabla_{xy}^2L(z)    
\end{align*}
in \eqref{InteractionDominanceX} and \eqref{InteractionDominanceY} are always positive semi-definite. We thus see that any convex-concave objective function is always nonnegative interaction dominant in both $x$ and $y$. A $\rho$-weakly-convex-weakly-concave function is $\alpha(s)$-interaction dominant with $\alpha(s) \geq -\rho$. For this weakly-convex-weakly-concave function $L(x , y)$ to have nonnegative interaction dominance, $L$ must have ``large enough'' interaction between $x$ and $y$ in the Hessian in order to ``overcome'' the effect of the smallest (negative) eigenvalue of partial Hessian $\nabla_{xx}^2 L$ and the largest (positive) eigenvalue of  partial Hessian $\nabla_{yy}^2L$.

\section{Preliminaries}  \label{PreliminarySection}

Grimmer et al. \cite{PPM_Landscape} used a generalization of the Moreau envelope, called \emph{saddle envelope}, introduced by \cite{Wets_et.al:SaddleEnvelope}, to study the behavior of a certain class of nonconvex-nonconcave objective functions. More precisely, given any $\rho$-weakly-convex-weakly-concave objective function $L: \mathbb{R}^n \times \mathbb{R}^m \rightarrow \mathbb{R}$ with $L \in \mathcal{C}^2$, and any $s > 0$, the saddle envelope $\mathcal{L}_s$ is defined as
\begin{align} \label{SaddleEnvelope}
    \mathcal{L}_s(x , y) := \min_{u \in \mathbb{R}^n} \max_{v \in \mathbb{R}^m} \left\{ L(u , v) + \frac{1}{2s} \| u - x \|^2 - \frac{1}{2s} \| v - y \|^2  \right\} \ .
\end{align}

Suppose $s < \frac{1}{\rho}$, then it is easy to see that $M(u , v) := L(u , v) + \frac{1}{2s} \| u - x \|^2 - \frac{1}{2s} \| v - y \|^2$ is $\left(\frac{1}{s} - \rho \right)$-strongly-convex-strongly-concave so that the saddle envelope (\ref{SaddleEnvelope}) is well-defined. Corollary $2.2$ in \cite{PPM_Landscape} implies that to find \emph{any} stationary point of $L$, one only needs to find those of $\mathcal{L}_s$. Imposing nonnegative interaction dominance paves the way for proving more powerful properties of the saddle envelope.

If an objective function is nonnegative interaction dominant in both $x$ and $y$ for some $s \in \left( 0 , \frac{1}{\rho} \right)$, its saddle envelope $\mathcal{L}_{s}(x , y)$ would then be convex-concave (\cite{PPM_Landscape}, Proposition $2.6$). If for some $s \in \left( 0 , \frac{1}{\rho} \right)$ the objective function is further positive interaction dominance, then the saddle envelope $\mathcal{L}_{s}(x , y)$ is strongly-convex-strongly-concave.  

These interaction dominance conditions can be equivalently characterized in terms of the convexity and concavity of the saddle envelop~\eqref{SaddleEnvelope} and in terms of the monotonicity of the saddle gradient $F(z) = \begin{bmatrix}
        \nabla_xL(z) \\
        -\nabla_y L(z)
    \end{bmatrix}$. This is formalized in Proposition \ref{EquivalenceOfStatements}.
    
Recently, \cite{FastEGMLee} presented algorithms with sublinear rate for nonconvex-nonconcave minimax optimization problems with $\rho$-comonotone gradient oracle for some negative $\rho$. A set-valued mapping $T:\mathbb{R}^{n + m} \rightrightarrows \mathbb{R}^{n + m}$ is said to be \textbf{$\rho$-monotone} on $\mathbb{R}^{n + m}$ if for every $\bar{z} \in \mathbb{R}^{n + m}$ there exists a neighborhood $V$ of $\bar{z}$ such that 
\begin{align*}
    \left( w - w' \right)^T \left( z - z' \right) \geq \rho \| z - z' \|^2 ~~~~~~ \text{for all} ~ z , z' \in V \text{and all} ~ w \in T(z), ~ w' \in T\left( z' \right)\ .
\end{align*}

A set-valued mapping\footnote{The use of set-valued operators is typical here, allowing these definitions to be applied to cases where the objective function is not smooth whence only admitting subdifferentials. However, such nonsmooth optimization is beyond the scope of our paper.} $T:\mathbb{R}^{n + m} \rightrightarrows \mathbb{R}^{n + m}$ is said to be \textbf{$\rho$-comonotone} if for every $\bar{z} \in \mathbb{R}^{n + m}$ there exists a neighborhood $V$ of $\bar{z}$ such that 
\begin{align*}
    \left( w - w' \right)^T \left( z - z' \right) \geq \rho \| w - w' \|^2 ~~~~~~ \text{for all} ~ z , z' \in V \text{and all} ~ w \in T(z), ~ w' \in T\left( z' \right)\ .
\end{align*}
The following proposition, in particular, establishes the equivalence of our assumptions and the negative comonotone setting recently considered by~\cite{FastEGMLee}.
    
\begin{Prop} \label{EquivalenceOfStatements}
    Let $L(x , y)$ be a twice-differentiable, $\rho$-weakly-convex-weakly-concave objective function, and $s \in \left( 0 , \frac{1}{\rho} \right)$. Then the following statements are equivalent:
    \begin{enumerate} [label = (\roman*)]
        \item $L(x , y)$ is $\alpha(s) \geq 0$-interaction dominance in both $x$ and $y$,
        \item The saddle envelope $\mathcal{L}_s(x , y)$ of $L$ is convex-concave,
        \item The oracle $F(x , y) = \begin{bmatrix}
            \nabla_x L(x , y) \\
            \nabla_y L(x , y)
        \end{bmatrix}$ of $L(x , y)$ is $-s$-comonotone.
    \end{enumerate}
\end{Prop}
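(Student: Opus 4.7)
My plan is to use the fact that under Assumption~\ref{Assumptions} with $s<1/\rho$, the prox map $(x,y)\mapsto(x^*,y^*):={\prox}_{s\cdot L}(x,y)$ coincides with the resolvent $J_{sF}=(I+sF)^{-1}$ of the saddle oracle and is a single-valued bijection of $\mathbb{R}^{n+m}$ (the inner objective in~\eqref{SaddleEnvelope} being $(s^{-1}-\rho)$-strongly-convex-strongly-concave). I would prove (ii)$\Leftrightarrow$(iii) by identifying the saddle oracle of $\mathcal{L}_s$ with the Yosida approximation of $F$, and (i)$\Leftrightarrow$(ii) by implicit differentiation of the prox followed by a Schur-complement calculation; the (i)$\Rightarrow$(ii) direction is already the content of Proposition~2.6 of~\cite{PPM_Landscape} cited above.

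For (ii)$\Leftrightarrow$(iii), the envelope theorem applied to~\eqref{SaddleEnvelope} gives $\nabla_x \mathcal{L}_s(z)=s^{-1}(x-x^*)$ and $-\nabla_y \mathcal{L}_s(z)=s^{-1}(y-y^*)$, so the saddle oracle of $\mathcal{L}_s$ is
\[
F_{\mathcal{L}_s}(z)=\tfrac{1}{s}\bigl(z-J_{sF}(z)\bigr)=F(J_{sF}(z)),
\]
which is exactly the Yosida approximation of $F$. Since $\mathcal{L}_s$ is smooth, its convex-concavity is equivalent to monotonicity of $F_{\mathcal{L}_s}$. Setting $w_i:=J_{sF}(z_i)$ so that $z_i=w_i+sF(w_i)$, direct expansion gives
\[
\langle F_{\mathcal{L}_s}(z_1)-F_{\mathcal{L}_s}(z_2),\,z_1-z_2\rangle=\langle F(w_1)-F(w_2),\,w_1-w_2\rangle+s\|F(w_1)-F(w_2)\|^2.
\]
Because $J_{sF}$ is a bijection of $\mathbb{R}^{n+m}$, the left-hand side is non-negative for all $z_1,z_2$ if and only if $\langle F(w_1)-F(w_2),w_1-w_2\rangle\geq -s\|F(w_1)-F(w_2)\|^2$ for all $w_1,w_2$, which is precisely $(-s)$-comonotonicity of $F$.

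For the converse direction (ii)$\Rightarrow$(i), I would differentiate the first-order optimality conditions of the prox implicitly with respect to $(x,y)$ to express $DJ_{sF}(z)$ in terms of the inverse of the shifted block Hessian
\[
H(z^*):=\begin{pmatrix}\nabla_{xx}^2 L(z^*)+s^{-1}I & \nabla_{xy}^2 L(z^*)\\ \nabla_{yx}^2 L(z^*) & \nabla_{yy}^2 L(z^*)-s^{-1}I\end{pmatrix}
\]
evaluated at $z^*=J_{sF}(z)$. Substituting into $DF_{\mathcal{L}_s}(z)=s^{-1}(I-DJ_{sF}(z))$ and applying the $2\times 2$ block-inverse formula to $H(z^*)^{-1}$, the diagonal blocks of the symmetric part of $DF_{\mathcal{L}_s}$ simplify, up to conjugation by positive-definite factors, to the interaction-dominance expressions in~\eqref{InteractionDominanceX} and~\eqref{InteractionDominanceY} evaluated at $z^*$. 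Since $J_{sF}$ is surjective, $z^*$ ranges over all of $\mathbb{R}^{n+m}$, so monotonicity of $F_{\mathcal{L}_s}$ is equivalent to both interaction-dominance inequalities holding everywhere with $\alpha(s)\geq 0$.

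The main obstacle will be sign bookkeeping in this last step: the $(y,y)$ block of $H$ is negative definite, so one must rewrite $(\nabla_{yy}^2L-s^{-1}I)^{-1}=-(s^{-1}I-\nabla_{yy}^2L)^{-1}$ to recover the exact form appearing in~\eqref{InteractionDominanceX}, and handle the analogous sign flip for the Schur complement giving~\eqref{InteractionDominanceY}. The resolvent/Yosida step (ii)$\Leftrightarrow$(iii) is then a one-line identity once $F_{\mathcal{L}_s}$ is identified with the Yosida approximation of $F$.
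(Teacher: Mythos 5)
Your proposal is correct, but it takes a more self-contained route than the paper, which discharges both equivalences by citation. For $(i)\Leftrightarrow(ii)$ the paper simply adds $s^{-1}I$ to the interaction-dominance expressions, inverts, and matches the result against the formula for $\nabla^2_{xx}\mathcal{L}_s$ and $\nabla^2_{yy}\mathcal{L}_s$ quoted from Lemma~3 of \cite{PPM_Landscape}; your implicit-differentiation computation of $DJ_{sF}(z)=\bigl(I+s\,DF(z^*)\bigr)^{-1}$ followed by the block-inverse formula is precisely a re-derivation of that lemma, and the identity $(I+sM)^{-1}=s^{-1}(s^{-1}I+M)^{-1}$ shows your Schur complements land exactly on the paper's expressions, so the sign bookkeeping you worry about does resolve. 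For $(ii)\Leftrightarrow(iii)$ the paper cites Appendix~A.1 of \cite{FastEGMLee}; your Yosida identity $F_{\mathcal{L}_s}=F\circ J_{sF}$ together with the expansion
\[
\langle F(w_1)-F(w_2),\,z_1-z_2\rangle=\langle F(w_1)-F(w_2),\,w_1-w_2\rangle+s\|F(w_1)-F(w_2)\|^2
\]
and the surjectivity of $J_{sF}$ is exactly the content of that cited result, proved in two lines. What your approach buys is independence from the two external lemmas and an explicit explanation of \emph{why} the comonotonicity threshold is exactly $-s$; what the paper's approach buys is brevity. Two minor points to tidy up: you should note that convex-concavity of the smooth $\mathcal{L}_s$ is equivalent to the two diagonal-block conditions $\nabla^2_{xx}\mathcal{L}_s\succeq 0$ and $-\nabla^2_{yy}\mathcal{L}_s\succeq 0$ because the symmetric part of $DF_{\mathcal{L}_s}$ is block diagonal (the off-diagonal blocks cancel), which is what lets you read the conclusion off the diagonal blocks alone; and the paper states comonotonicity locally (on neighborhoods) while your argument establishes the global version — harmless here since $J_{sF}$ is a global bijection, but worth a sentence.
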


\begin{proof}
    $(i) \iff (ii)$: Observe that the conditions~(\ref{InteractionDominanceX}) and (\ref{InteractionDominanceY}) hold with nonnegative $\alpha$ exactly when 
    \begin{equation*} 
        \begin{aligned}
            \nabla_{xx}^2L(z) + \nabla_{xy}^2L(z)\left(s^{-1} I - \nabla_{yy}^2L(z)\right)^{-1}\nabla_{yx}^2L(z) & \succeq 0 & \\
            -\nabla_{yy}^2L(z) + \nabla_{yx}^2L(z)\left(s^{-1} I + \nabla_{xx}^2L(z)\right)^{-1}\nabla_{xy}^2L(z) & \succeq 0\ . 
        \end{aligned}
    \end{equation*}
    Adding an identity matrix $\frac{1}{s} I$ above and inverting the resulting positive definite matrix yields the following equivalent characterization
    \begin{equation*} 
        \begin{aligned}
            s^{-1} I - s^{-2} \left( s^{-1} I + \nabla_{xx}^2L (z) + \nabla_{xy}^2L(z)\left(s^{-1} I - \nabla_{yy}^2L(z)\right)^{-1}\nabla_{yx}^2L(z) \right)^{-1} \succeq 0 & \\
            s^{-1} I - s^{-2} \left( s^{-1} I - \nabla_{yy}^2L (z) + \nabla_{yx}^2L(z)\left(s^{-1} I + \nabla_{xx}^2L(z)\right)^{-1}\nabla_{xy}^2L(z) \right)^{-1} \preceq 0 . 
        \end{aligned}
    \end{equation*}
    By Lemma $3$ in \cite{PPM_Landscape}, these are exactly the $xx$ and $yy$ components of the saddle envelope's Hessian, i.e. $\nabla^2_{xx} \mathcal{L}_s(z)$ and $\nabla^2_{yy} \mathcal{L}_s(z)$, whence the assertion is proved.
    
    $(ii) \iff (iii)$: In~\cite[Appendix A.1]{FastEGMLee}, Lee and Kim showed that the saddle envelope's gradient mapping $F_s(.) = \begin{bmatrix}
        \nabla_x \mathcal{L}_{s}(.) \\
        -\nabla_y \mathcal{L}_{s}(.)
    \end{bmatrix}$ is monotone if and only if $F$ is $-s$-comonotone. Recalling that a gradient mapping $(\nabla_x L,-\nabla_y L)$ is monotone if and only if the associated function is convex-concave, the proof is complete.
\end{proof}

Let us now state the definition of a stationary point of a bifunction.

\begin{Def} \label{StationaryPoint}
    A point $(x^* , y^*) \in \mathbb{R}^{n + m}$ is a stationary point of an objective function $L(x , y)$ if 
    \[ \nabla_x L(x^* , y^*) = 0 , ~~~ \text{and} ~~~ \nabla_y L(x^* , y^*) = 0\ . \] 
\end{Def}
We now provide a result that is all but stated in \cite{PPM_Landscape}:
\begin{Lem} \label{UniqueStationaryPoint}
    A $\rho$-weakly-convex-weakly-concave objective function $L$ that is positive interaction dominance in both $x$ and $y$ has a unique stationary point.
\end{Lem}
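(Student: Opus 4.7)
The plan is to reduce the uniqueness question for $L$ to the uniqueness of a saddle point for its saddle envelope $\mathcal{L}_s$, and then invoke strong convex-concavity. The reduction is furnished by the correspondence already observed after equation~\eqref{SaddleEnvelope}: Corollary~2.2 of~\cite{PPM_Landscape} tells us that the set of stationary points of $L$ coincides with the set of stationary points of $\mathcal{L}_s$ whenever $s\in(0,1/\rho)$, so it suffices to show $\mathcal{L}_s$ has a unique stationary point for some admissible $s$.

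Next, I would upgrade the equivalence of Proposition~\ref{EquivalenceOfStatements} from nonnegative interaction dominance to the strict case. The direction $(i)\Rightarrow(ii)$ in that proof goes by adding $\tfrac{1}{s}I$, inverting, and identifying the result with $\nabla^2_{xx}\mathcal{L}_s$ and $\nabla^2_{yy}\mathcal{L}_s$ via Lemma~3 of~\cite{PPM_Landscape}. The very same computation, carried through with $\alpha(s)>0$ in place of $0$, yields explicit positive lower bounds
\[
\nabla^2_{xx}\mathcal{L}_s(z)\succeq \gamma(s)I,\qquad -\nabla^2_{yy}\mathcal{L}_s(z)\succeq \gamma(s)I
\]
for some $\gamma(s)>0$ depending on $s$ and $\alpha(s)$. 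Hence $\mathcal{L}_s$ is strongly-convex-strongly-concave on all of $\mathbb{R}^n\times\mathbb{R}^m$ (this statement is asserted in the paragraph preceding Proposition~\ref{EquivalenceOfStatements} and is attributed to~\cite{PPM_Landscape}, so I would simply cite it rather than redo the calculation).

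A strongly-convex-strongly-concave $\mathcal{C}^2$ function on $\mathbb{R}^n\times\mathbb{R}^m$ has a unique saddle point, and because strong convexity in $x$ and strong concavity in $y$ force the first-order system $\nabla_x \mathcal{L}_s=0$, $\nabla_y \mathcal{L}_s=0$ to have a unique solution, that saddle point is the unique stationary point of $\mathcal{L}_s$. Combining with the correspondence from step one, $L$ has a unique stationary point.

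The only subtlety worth flagging is the stationary-point correspondence: one needs $(x,y)$ to be a stationary point of $L$ iff $(x,y)=\prox_{s\cdot L}(x,y)$ iff $\nabla \mathcal{L}_s(x,y)=0$. The first equivalence is immediate from writing out the first-order optimality conditions of the prox subproblem (which is well-defined and single-valued because $s<1/\rho$), and the second follows from the standard envelope-type identity $\nabla \mathcal{L}_s(x,y)=\bigl(\tfrac{x-u^*}{s},\tfrac{v^*-y}{s}\bigr)$ where $(u^*,v^*)=\prox_{s\cdot L}(x,y)$. This is the step I would be most careful with, but it is exactly the content of Corollary~2.2 in~\cite{PPM_Landscape}, so a citation suffices.
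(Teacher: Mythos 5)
Your proposal is correct and follows essentially the same route as the paper's own proof: positive interaction dominance makes the saddle envelope $\mathcal{L}_s$ strongly-convex-strongly-concave (Proposition 2.6 of \cite{PPM_Landscape}), which gives a unique saddle/stationary point of $\mathcal{L}_s$, and Corollary 2.2 of \cite{PPM_Landscape} transfers this uniqueness back to $L$. The extra detail you supply on the stationary-point correspondence and the strict version of Proposition~\ref{EquivalenceOfStatements} is sound but not needed beyond the citations the paper already uses.
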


\begin{proof}
    By hypothesis and Proposition $2.6$ in \cite{PPM_Landscape}, the saddle envelope is strongly-convex-strongly-concave whence adopting a unique saddle point $(x^* , y^*)$ which is also its unique stationary point. By Corollary $2.2$ in \cite{PPM_Landscape}, then $(x^* , y^*)$ is the unique stationary point of $L$; because otherwise, if $L$ has any other stationary point $(\tilde{x}^* , \tilde{y}^*) \neq (x^* , y^*)$, it would clearly contradict Corollary $2.2$ of \cite{PPM_Landscape}. 
\end{proof}

\section{Damped EGM} \label{EGMApproximationOfPPM}
Damped PPM, as the name suggests, introduces a damping parameter $\lambda \in (0 , 1]$ in each iteration of PPM. The iteration update is
\begin{align} \label{DampedPPMDefinition}
    \begin{bmatrix}
        x_{k + 1} \\
        y_{k + 1}
    \end{bmatrix} = (1 - \lambda) \begin{bmatrix}
        x_k \\
        y_k
    \end{bmatrix} + \lambda ~ \prox_{sf} \left( \begin{bmatrix}
    x_k \\
    y_k
    \end{bmatrix} \right) \ .
\end{align}
This damping, illustrated in (\ref{DampedPPMDefinition}), decreases the size of the proximal step in each iteration. The inclusion of $\lambda = 1$ allows taking a full proximal step in each iteration.

In this section, we introduce the damped EGM and show that it is approximating the damped PPM introduced in  \cite{PPM_Landscape}. We first recall that EGM is an approximation of PPM \cite{Nemirovski:EGM}. Next, we will extend this result and show that the damped EGM is an approximation to damped PPM~\eqref{DampedPPMDefinition}. Notice that damped EGM does not need to solve an implicit step, thus the update is computationally cheaper than that of damped PPM \eqref{DampedPPMDefinition}.

The damped EGM is presented in Algorithm \ref{GeneralizedEGM}.

\begin{algorithm}
    \caption{Damped EGM}\label{GeneralizedEGM}
    \hspace*{\algorithmicindent} \textbf{Input}: $z_0 := (x_0 , y_0)$, step-size $s > 0$, damping parameter $\lambda \in (0 , 1]$, and tolerance $\epsilon > 0$ \\
    \begin{algorithmic}[1]
        \State $k=0$
        \While {$\left \lVert \begin{bmatrix}
        \nabla_x L(x_k , y_k) \\
        - \nabla_y L(x_k , y_k)
        \end{bmatrix} \right\rVert \geq \epsilon$}
        \State \hspace*{\algorithmicindent} Find the mid-point: $\begin{bmatrix}
        x_{k + 1}' \\
        y_{k + 1}'
        \end{bmatrix} = \begin{bmatrix}
        x_k \\
        y_k
        \end{bmatrix} - s \begin{bmatrix}
        \nabla_x L(x_k , y_k) \\
        - \nabla_y L(x_k , y_k)
        \end{bmatrix}$,
        \State \hspace*{\algorithmicindent}  Find the next-iterate: $\begin{bmatrix}
        x_{k + 1} \\
        y_{k + 1}
        \end{bmatrix} = \begin{bmatrix}
        x_k \\
        y_k
        \end{bmatrix} - \lambda s \begin{bmatrix}
        \nabla_x L\left(x_{k + 1}' , y_{k + 1}'\right) \\
        - \nabla_y L\left(x_{k + 1}' , y_{k + 1}'\right)
        \end{bmatrix}$
        \State \hspace*{\algorithmicindent} $k \leftarrow k+1$
        \EndWhile
    \end{algorithmic}
\end{algorithm}

Damped EGM is a generalization of traditional EGM, where the step-size of the two steps can be chosen differently. This difference in steps comes from the damping parameter $\lambda$ that controls the length of the second step. Intuitively, it is natural to think that the lack of convexity and concavity would require an algorithm to take a type of ``cautiously aggressive'' steps to avoid divergence and cycling that are common phenomena in nonconvex-nonconcave minimax optimization \cite{Grimmer:LimitingBehaviorMinimax,PPM_Landscape}. The following Proposition establishes that the damped EGM in Algorithm \ref{GeneralizedEGM} approximates damped PPM.



\begin{Prop}
    Damped EGM update in Algorithm \ref{GeneralizedEGM}, when applied to the $\rho$-weakly-convex-weakly-concave objective function $L(x , y)$, is an approximation to the update for damped PPM (\ref{DampedPPMDefinition}).
\end{Prop}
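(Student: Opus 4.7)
The plan is to exhibit the damped EGM iterate as a perturbation of the damped PPM iterate, with the size of the perturbation controlled by $\beta$-smoothness of $F$ and behaving like a higher-order term in the step-size $s$. This mirrors the classical approximation argument of Nemirovski and of Mokhtari et al.\ in the convex-concave regime, adapted to the damping factor $\lambda$.

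First, I would rewrite damped PPM in a form that is directly comparable to damped EGM. Let $z_k=(x_k,y_k)$ and define the implicit prox point $\bar{z}_{k+1}:=\prox_{sf}(z_k)$, which is well-defined because $s\in(0,1/\rho)$ makes the inner problem strongly-convex-strongly-concave. By the optimality conditions for the prox step, $\bar{z}_{k+1}=z_k - s F(\bar{z}_{k+1})$, and consequently the damped PPM update~\eqref{DampedPPMDefinition} can be expressed as
\begin{equation*}
z^{P}_{k+1} \;=\; (1-\lambda)z_k + \lambda \bar{z}_{k+1} \;=\; z_k - \lambda s\, F(\bar{z}_{k+1}).
\end{equation*}
On the other hand, Algorithm~\ref{GeneralizedEGM} produces the mid-point $z'_{k+1}=z_k - s F(z_k)$ and the next iterate $z^{E}_{k+1}=z_k - \lambda s\, F(z'_{k+1})$. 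Hence damped EGM and damped PPM differ only in the point at which the gradient of the second step is evaluated: $z'_{k+1}$ versus $\bar{z}_{k+1}$.

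Next, I would bound $\|z'_{k+1}-\bar{z}_{k+1}\|$ using $\beta$-smoothness. Since both points are of the form $z_k - sF(\cdot)$, subtracting gives
\begin{equation*}
z'_{k+1}-\bar{z}_{k+1} \;=\; -s\bigl(F(z_k)-F(\bar{z}_{k+1})\bigr),
\end{equation*}
so Assumption~\ref{Assumptions}(1) implies $\|z'_{k+1}-\bar{z}_{k+1}\| \le \beta s \|z_k-\bar{z}_{k+1}\| = \beta s^2 \|F(\bar{z}_{k+1})\|$. The EGM mid-point therefore matches the implicit prox point to order $O(s^2)$.

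Finally, another application of $\beta$-smoothness yields
\begin{equation*}
\|z^{E}_{k+1}-z^{P}_{k+1}\| \;=\; \lambda s\,\|F(z'_{k+1})-F(\bar{z}_{k+1})\| \;\le\; \lambda\beta s \|z'_{k+1}-\bar{z}_{k+1}\| \;\le\; \lambda\beta^{2} s^{3}\|F(\bar{z}_{k+1})\|,
\end{equation*}
so the deviation between the damped EGM iterate and the damped PPM iterate is of order $O(s^{3})$, higher order than the $O(s)$ step actually taken; this is what it means for damped EGM to approximate damped PPM. I do not anticipate a serious obstacle: the only delicate point is to justify that $\bar{z}_{k+1}$ is well-defined and that the gradient differences $F(z_k)-F(\bar{z}_{k+1})$ and $F(z'_{k+1})-F(\bar{z}_{k+1})$ can be controlled by the Lipschitz constant $\beta$, both of which follow directly from Assumption~\ref{Assumptions}.
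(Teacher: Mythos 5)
Your proposal is correct, and it reaches the conclusion by a genuinely different route than the paper. The paper Taylor-expands both updates to second order in $s$: it expands the damped EGM step $x_{k+1}=x_k-\lambda s\nabla_xL(z_k-sF(z_k))$ and the prox point $z_k^+$ around $z_k$, matches the coefficients of $s$ and $s^2$ term by term, and concludes $\|\tilde z_{k+1}-z_{k+1}\|=o(s^2)$. You instead exploit the implicit fixed-point characterization $\bar z_{k+1}=z_k-sF(\bar z_{k+1})$ to write both updates in the common form $z_k-\lambda s\,F(\cdot)$, and then apply the $\beta$-Lipschitz continuity of $F$ twice, obtaining the explicit, non-asymptotic bound $\|z^{E}_{k+1}-z^{P}_{k+1}\|\le\lambda\beta^2 s^3\|F(\bar z_{k+1})\|$. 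Your argument is more elementary (no Hessian computations, no little-$o$ bookkeeping, no appeal to expansions of $(I+sF)^{-1}$) and is quantitatively stronger, since $O(s^3)$ with an explicit constant implies the paper's $o(s^2)$. Indeed, combining your bound with $\|F(\bar z_{k+1})\|=\|F(\bar z_{k+1})-F(z^*)\|\le\beta\|\bar z_{k+1}-z^*\|$ essentially recovers the estimate $\lambda s^3\beta^3\|z_k-z^*\|$ that the paper only obtains later, in the proof of Theorem~\ref{GeneralizedEGMConvergenceTheorem}, via a considerably more delicate integral-remainder and mean-value argument. What the paper's expansion buys in exchange is an explicit display of the matching first- and second-order terms of the two updates, which it reuses when analyzing the quadratic example in Section~\ref{Examples}; your approach does not produce those coefficients but is cleaner for the approximation claim itself. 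The only points to make explicit are the ones you already flag: $s\in(0,1/\rho)$ so that the prox point exists and is unique, and the $\beta$-smoothness from Assumption~\ref{Assumptions} so that $F$ is $\beta$-Lipschitz.
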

\begin{proof}
    We write the Taylor expansion of the update for $x$ in  Algorithm \ref{GeneralizedEGM}, which gives us,
\begin{align}
    x_{k + 1} & = x_k - \lambda s \nabla_x L\left(x_k - s \nabla_x L(z_k) , y_k + s \nabla_y L(z_k) \right) & \nonumber \\
    & = x_k - \lambda s \left[ \nabla_x L(z_k) - s \nabla_{xx}^2L(z_k) \nabla_x L(z_k) + s \nabla_{xy}^2 L(z_k) \nabla_y L(z_k) + o(s) \right] & \nonumber \\
    & = x_k - \lambda s \nabla_x L(z_k) + \lambda s^2 \nabla_{x x}^2L(z_k) \nabla_x L(z_k) - \lambda s^2 \nabla_{xx}^2L(z_k) \nabla_y L(z_k) + o\left(s^2\right)\ . & \label{generalizedEGMUpdateX}
\end{align}
Similarly, one can find the update of $y$ as 
\begin{align}
    y_{k + 1} & = y_k + \lambda s \nabla_y L(z_k) - \lambda s^2 \nabla_{yx}^2L(z_k) \nabla_x L(z_k) + \lambda s^2 \nabla_{yy}^2L(z_k) \nabla_yL(z_k) + o\left(s^2\right)\ . & \label{generalizedEGMUpdateY}
\end{align}
Let now $z_k^+ := \prox_{s.L} (z_k)$ be one proximal step of size from the current iterate $z_k$. We then have,
\begin{align}
    x^+_k & = x_k - s \nabla_xL \left( x_k - s \nabla_xL(z^+_k) , y_k + s \nabla_yL(z^+_k) \right) & \nonumber \\
    & = x_k - s \left[ \nabla_xL(z_k) - s \nabla_{xx}^2L(z_k) \nabla_xL(z^+_k) + s\nabla_{xy}^2L(z_k) \nabla_yL(z^+_k) + o(s) \right] & \nonumber \\
    & = x_k - s \nabla_xL(z_k) + s^2 \nabla_{xx}^2L(z_k) \nabla_xL(z_k) - s^2 \nabla_{xy}^2L(z_k) \nabla_yL(z_k) + o(s^2)\ . & \label{PPMUpdateX}
\end{align}
where the second equality follows from the local Lipschitzness of partial gradients.

Similarly,
\begin{align} 
    y^+_k = y_k + s \nabla_yL(z_k) - s^2 \nabla_{yx}^2L(z_k) \nabla_xL(z_k) + s^2 \nabla_{yy}^2L(z_k) \nabla_yL(z_k) + o(s^2)\ . \label{PPMUpdateY}
\end{align}

We know from (\ref{DampedPPMDefinition}) that the update iterate of a damped PPM with constant $\lambda$ is given by 
\begin{align} \label{DampedPPM}
    \tilde{z}_{k + 1} = (1 - \lambda) z_k + \lambda z_k^+\ .
\end{align}
Combining (\ref{PPMUpdateX}) and (\ref{PPMUpdateY}) with (\ref{DampedPPM}) and comparing with (\ref{generalizedEGMUpdateX}) and (\ref{generalizedEGMUpdateY}), one can observe that
\begin{align*}
    \| \tilde{z}_{k + 1} - z_{k + 1} \| = o\left( s^2 \right)\ .
\end{align*}
This concludes the proof of the claim that the damped EGM update as defined in Algorithm \ref{GeneralizedEGM} is an approximation to that of damped PPM.
\end{proof}

\section{Main Result} \label{MainResult}
In this section, we show that if $L(x,y)$ is interaction dominance, then the damped EGM with proper step-size converges linearly to a stationary point. Furthermore, we show that damped EGM may diverge without interaction dominance, which showcases the tightness of using interaction dominance to characterize the convergence of damped EGM.


\subsection{Convergence Result} \label{ConvergenceResult}   
First, let us state our main convergence result:
\begin{Thm} \label{GeneralizedEGMConvergenceTheorem}
    Suppose the objective function $L:\mathbb{R}^n \times \mathbb{R}^m \rightarrow \mathbb{R}$ in Problem (\ref{generalproblem}) satisfies Assumption \ref{Assumptions}, and let $z^* := (x^* , y^*)$ be its saddle point. Choose the parameters $s$ and $\lambda$ such that they satisfy 
    \begin{align} \label{ConditionsInTheorem}
        \frac{2}{s^3\left(\frac{1}{s \alpha(s)} + 1\right)} > \beta^3 ,~~~~~~~\lambda < \min\left\{ 1 , \left( \frac{1}{s \rho} - 1 \right)^2 \right\} \left[ \frac{2}{\frac{1}{s \alpha(s)} + 1} -  s^3 \beta^3 \right]
    \end{align}

    The damped EGM with step-size $s \in \left( 0 , \frac{1}{\rho} \right)$ applied to the problem $\min_{x \in \mathbb{R}^n} \max_{y \in \mathbb{R}^m} L(x , y)$ with the constant $\lambda \in (0 , 1]$ linearly converges to the unique stationary point of $L$. More precisely, for any iterate $(x_k , y_k)$ and starting point $(x_0 , y_0)$ one has 
    \begin{align*}
        \left\lVert \begin{bmatrix}
            x_k - x^* \\
            y_k - y^*
        \end{bmatrix} \right\rVert \leq \left( 1 - \frac{2 \lambda}{\frac{1}{s \alpha(s)} + 1} + \frac{\lambda^2}{\min\left\{ 1 , \left( \frac{1}{s\rho} - 1 \right)^2 \right\}} + \lambda s^3 \beta^3 \right)^k \left\lVert \begin{bmatrix}
            x_0 - x^* \\
            y_0 - y^*
        \end{bmatrix} \right\rVert.
    \end{align*}
\end{Thm}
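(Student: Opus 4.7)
The plan is to exploit two facts already established in the paper: (a) by Proposition~\ref{EquivalenceOfStatements} and the positive interaction dominance in Assumption~\ref{Assumptions}, the saddle envelope $\mathcal{L}_s$ is strongly convex--strongly concave, which yields a contraction for damped PPM toward the unique stationary point guaranteed by Lemma~\ref{UniqueStationaryPoint}; and (b) damped EGM is an $o(s^2)$ approximation of damped PPM (Section~\ref{EGMApproximationOfPPM}). The overall strategy is to work directly with the squared distance $\|z_{k+1}-z^*\|^2$ for the explicit damped EGM update and match each resulting term against the corresponding damped PPM quantity, paying a price for the Taylor-expansion discrepancy.

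First I would expand
\begin{equation*}
    \|z_{k+1}-z^*\|^2 = \|z_k-z^*\|^2 - 2\lambda s \bigl\langle F(z_k - sF(z_k)),\, z_k - z^*\bigr\rangle + \lambda^2 s^2 \|F(z_k - sF(z_k))\|^2.
\end{equation*}
For the cross term I would use the Taylor identification from Section~\ref{EGMApproximationOfPPM}, namely that $sF(z_k - sF(z_k))$ equals $z_k - \prox_{sL}(z_k)$ up to an $o(s^2)$ error; combined with the fact that $z_k - \prox_{sL}(z_k) = s\,\tilde{F}_s(z_k)$ where $\tilde{F}_s$ is the saddle-gradient of $\mathcal{L}_s$, strong monotonicity of $\tilde{F}_s$ with modulus $\alpha(s)$ and $(1/s)$-cocoercivity of the proximal residual should yield a lower bound of the form $\langle F(z_k - sF(z_k)), z_k-z^*\rangle \geq \frac{\alpha(s)}{1+s\alpha(s)}\|z_k - z^*\|^2$ modulo the same $o(s^2)$ error, which will account for the $\lambda s^3\beta^3$ term in the final rate.

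Next, for the quadratic term I would use $\beta$-Lipschitzness of $F$ to get $\|F(z_k - sF(z_k))\| \leq \beta\|z_k - sF(z_k) - z^*\|$, and then bound $\|z_k - sF(z_k) - z^*\|^2$ by $\|z_k - z^*\|^2 / \min\{1,(\tfrac{1}{s\rho}-1)^2\}$; this last ratio is where the weak-convex-weak-concave parameter $\rho$ enters, arising from a spectral bound on the linearization $I - s\nabla F$ in directions where the Hessian partial blocks can reach $\pm\rho I$. Assembling these estimates produces the claimed contraction factor, and the parameter window~\eqref{ConditionsInTheorem} is precisely what guarantees the factor lies strictly below one, thereby giving the stated linear convergence.

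The main obstacle I anticipate is managing three simultaneous error sources without double-counting: the strong-monotonicity contraction coming from the saddle envelope, the $\lambda^2$ blow-up intrinsic to damping with an explicit operator, and the Taylor-expansion gap between $F(z_k - sF(z_k))$ and the proximal residual $\tilde{F}_s(z_k)$. Keeping the $o(s^2)$ remainders uniform over a neighborhood of $z^*$ (so that they can be absorbed into the clean $s^3\beta^3$ term) and verifying that the Lipschitz step does not introduce a factor beyond what $(\tfrac{1}{s\rho}-1)^{-2}$ absorbs will be the delicate bookkeeping. I would also check at the end that the conditions in~\eqref{ConditionsInTheorem} make all three contributions compatible, by noting that the dominant linear-in-$\lambda$ term has a strictly negative coefficient whenever $s^3\beta^3 < 2/(1/(s\alpha(s))+1)$ and $\lambda$ is small enough.
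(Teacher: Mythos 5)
Your route is genuinely different from the paper's and, as sketched, one of its key steps fails. The paper never expands $\|z_{k+1}-z^*\|^2$ for the explicit update; instead it bounds the discrepancy between the damped EGM iterate and the damped PPM iterate, $\|\tilde z_{k+1}-z_{k+1}\|=\lambda s\,\|F((I+sF)^{-1}(z_k))-F(z_k-sF(z_k))\|\le \lambda s^3\beta^3\|z_k-z^*\|$, via a Taylor expansion with integral remainder of $\tau\mapsto (I+\tau F)^{-1}(z_k)$, and then simply adds this perturbation to the damped PPM contraction factor $c=1-\frac{2\lambda}{1/(s\alpha(s))+1}+\frac{\lambda^2}{\min\{1,(1/(s\rho)-1)^2\}}$ imported wholesale from \cite{PPM_Landscape}, obtaining the per-step factor $c+\lambda s^3\beta^3$ by the triangle inequality. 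Your cross-term plan (replace $F(z_k-sF(z_k))$ by the saddle-envelope gradient $F_s(z_k)=F((I+sF)^{-1}(z_k))$ up to the Taylor error and invoke strong monotonicity with modulus $\alpha(s)/(1+s\alpha(s))$) is consistent with this and would reproduce the $-2\lambda/(1/(s\alpha(s))+1)$ term, albeit with the Taylor error entering as $2\lambda s^3\beta^3$ rather than the stated $\lambda s^3\beta^3$.

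The genuine gap is in your quadratic term. You propose $\|F(z_k-sF(z_k))\|\le\beta\|z_k-sF(z_k)-z^*\|$ followed by $\|z_k-sF(z_k)-z^*\|^2\le\|z_k-z^*\|^2/\min\{1,(1/(s\rho)-1)^2\}$. The second inequality is false: whenever $s\rho\le 1/2$ its right-hand side equals $\|z_k-z^*\|^2$, so you are asserting that a forward gradient step is nonexpansive, whereas for a $\rho$-weakly-convex-weakly-concave objective one only has $\|z_k-sF(z_k)-z^*\|^2\le(1+2s\rho+s^2\beta^2)\|z_k-z^*\|^2$, and the expansion is real: for the paper's own quadratic example \eqref{Nonconvex-NonconcaveQuadraticProblem}, which is positively interaction dominant for suitable $s$, one computes $\|z_k-sF(z_k)\|^2=((1+s\rho)^2+s^2\bar A^2)\|z_k\|^2>\|z_k-z^*\|^2$. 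The constant $\min\{1,(1/(s\rho)-1)^2\}$ is a property of the proximal residual $z_k-(I+sF)^{-1}(z_k)=sF_s(z_k)$ (equivalently, of the Lipschitz constant of the saddle-envelope gradient), not of the forward step. Moreover, even granting your inequality, your quadratic term would be $\lambda^2 s^2\beta^2/\min\{1,(1/(s\rho)-1)^2\}$, and $s^2\beta^2$ is not bounded by $1$ under the theorem's hypotheses (condition \eqref{ConditionsInTheorem} only forces $s\beta<2^{1/3}$), so the stated rate would not follow. The repair is to route the quadratic term through the resolvent as well --- bound $s\|F_s(z_k)\|\le\|z_k-z^*\|/\min\{1,1/(s\rho)-1\}$ and absorb $F(z_k-sF(z_k))-F_s(z_k)$ into the Taylor error --- which is precisely what the paper's shorter perturbation-plus-triangle-inequality argument accomplishes without reopening the PPM analysis.
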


We now state some of remarks to better understand the statement of the theorem, simplify the conditions under which the statements hold, and observe what the Theorem translates into when considering special cases,

\begin{Rem}
    For any $\mu$-strongly-concave-strongly-concave, Theorem \ref{GeneralizedEGMConvergenceTheorem} has a linear convergence rate of $O\left( \frac{1}{s \lambda \mu} \log \left( \frac{1}{\epsilon} \right) \right)$. This is evident by plugging $\alpha = \mu$ and $\rho = -\mu$ in the convergence rate in the statement of the Theorem. Assuming that $s = O\left( \beta^{-1} \sqrt{\frac{\mu}{\beta}} \right)$, which results in a step-size smaller than $\frac{1}{\beta}$ by a factor of $\sqrt{\frac{\mu}{\beta}}$, we obtain a convergence rate of $O\left( \sqrt{\frac{\beta^3}{\lambda^2 \mu^3}} \log \left( \frac{1}{\epsilon} \right) \right)$ which is a reasonable linear rate. The best rate known for EGM in the convex-concave case is $O\left( \frac{\beta}{\mu} \log \left( \frac{1}{\epsilon} \right) \right)$ (see for example~\cite[Theorem~1]{Gidel2018Variational}, \cite[Theorem~7]{Mokhtari:UnifiedMinimax}, \cite[Lemma~3.1]{Tseng:PPM_EGM}, or \cite[Proposition~2.2]{AlvezMonteiroetal:HPEMIP})
\end{Rem}

\begin{Rem} \label{ParameterSelection}
    The selection of the pair of parameters $\lambda$ and $s$ satisfying (\ref{ConditionsInTheorem}) given $\rho$ and $\beta$, and $\alpha$ as a function of $s$ is not difficult. One observes that, plugging $s = \frac{t}{\beta}$ in the condition on $s$ on the left-hand side of (\ref{ConditionsInTheorem}), one would get an inequality in $t$ and solve for $t$. In many cases, e.g. quadratic problems, this inequality entails solely a polynomial and is trivial to solve. This would enlighten one on how smaller than $\beta$ should the step-size $s > 0$ be taken. We would like to point out that the possible values for $s$ usually involve an interval as opposed to arbitrarily small values. We also note that the damping introduced in our method is sometimes necessary for the convergence of EGM.

    For instance, consider the problem 
    \begin{align} \label{BenNonconvexProblem}
        \min_{x} \max_{y} L(x , y) = f(x) + \bar{A} x y - f(y), ~~~~~~ \text{with} ~ f(x) = (x-1)(x+1)(x-3)(x+3)
    \end{align}
    that is a nonconvex-nonconcave problem. Let $\bar{A} = 100$. A simple examination of our conditions in the Theorem as described above implies that any $s \in (0.00245 , 0.00651)$  with a damping factor $\lambda \in (0 , 0.06)$ would guarantee convergence. Choosing $s^* = 200$ and $\lambda^* = 0.01$ we observe convergence for any starting point the box $[-4 , 4] \times [-4 , 4]$ as in Fig. \ref{fig:Nonconvex-NonconcaveBen1}. Choosing, instead, $\lambda^* = 1$, whence recovering undamped EGM, results in cycling as shown in Fig. \ref{fig:Nonconvex-NonconcaveBen2}.
\end{Rem}

\begin{Rem} \label{ExplicitBoundsOnEta}
    Let us see through the inequality on the left-hand side of (\ref{ConditionsInTheorem}) more explicitly under some assumptions. Suppose we are given a problem that is nonnegative interaction dominant and we further restrict $s$ to satisfy
    \begin{align} \label{EtaLowerBound}
        \frac{1}{s} \geq \frac{\max \left\{ \| \nabla_{xx}^2L \|^2 , \| \nabla_{xx}^2L \|^2 \right\} }{\rho} > \frac{\rho^2}{\rho} = \rho\ .
    \end{align} 
    On the other hand, we have
    \begin{align*}
        \left( s^{-1} I - \nabla_{yy}^2L(z) \right)^{-1} & \succeq \left( s^{-1} + \max \left\{ \| \nabla_{xx}^2L \| , \| \nabla_{xx}^2L \| \right\} \right)^{-1} I  \\
        \left( s^{-1} I + \nabla_{xx}^2L(z) \right)^{-1} & \succeq \left( s^{-1} + \max \left\{ \| \nabla_{xx}^2L \| , \| \nabla_{xx}^2L \| \right\} \right)^{-1} I \ .
    \end{align*}
    so that $\alpha$ can be lower-bounded as
    \begin{align} \label{AlphaLowerBound}
        \alpha \geq -\rho + \frac{s . \lambda_{ \min } \left( \nabla_{xy}^2L(z) \nabla_{yx}^2L(z) \right) }{1 + s . \max \left\{ \| \nabla_{xx}^2L \| , \| \nabla_{xx}^2L \| \right\} }\ .
    \end{align}
    Clearly, $s$ can not be chosen arbitrarily small as that would mar interaction dominance. From (\ref{AlphaLowerBound}), a sufficient upper bound on $s$ to preserve nonnegative interaction dominance is
    \begin{align} \label{EtaUpperBound1}
        s \geq \frac{\rho}{ \lambda_{ \min } \left( \nabla_{xy}^2L(z) \nabla_{yx}^2L(z) \right) - \rho . \max \left\{ \| \nabla_{xx}^2L \| , \| \nabla_{xx}^2L \| \right\}\ } .
    \end{align}
    The lower bound (\ref{EtaUpperBound1}), as mentioned, preserves nonnegative interaction dominance. It further illustrates what the ``sufficiently large'' interaction requirement means. To further illustrate the explicit restrictions on $s$, in light of the inequality on the left-hand side of (\ref{ConditionsInTheorem}), let us further suppose $s < \frac{1}{\beta}$\footnote{This condition is consistent with the classical smooth optimization literature choosing a step-size smaller than the reciprocal of the Lipschitz constant of the oracle.}. This further assumption and (\ref{AlphaLowerBound}) imply
    \begin{align} \label{AlphaLowerBound2}
        \alpha(s) \geq -\rho + \frac{1 + s. \max \left\{ \| \nabla_{xx}^2L \| , \| \nabla_{xx}^2L \| \right\}}{s . (\beta + \max \left\{ \| \nabla_{xx}^2L \| , \| \nabla_{xx}^2L \| \right\} )}\cdot \rho = \frac{1 - s \cdot \beta}{s \cdot (\beta + \max \left\{ \| \nabla_{xx}^2L \| , \| \nabla_{xx}^2L \| \right\}) }\cdot \rho \ .
    \end{align}
    Combining (\ref{EtaLowerBound}) and (\ref{AlphaLowerBound2}) with the inequality on the left-hand side of (\ref{ConditionsInTheorem}) one would get
    \begin{align*}
        \frac{2}{s^3\left(\frac{1}{s \alpha(s)} + 1\right)} & \geq \frac{2 \frac{\max \left\{ \| \nabla_{xx}^2L \|^6 , \| \nabla_{xx}^2L \|^6 \right\}}{\rho^3}}{1 + \frac{ (\beta + \max \left\{ \| \nabla_{xx}^2L \| , \| \nabla_{xx}^2L \| \right\})}{\rho(1 + s \cdot \beta)}} \geq \frac{2 \frac{\max \left\{ \| \nabla_{xx}^2L \|^6 , \| \nabla_{xx}^2L \|^6 \right\}}{\rho^3}}{1 + \frac{1}{s \cdot \rho}} > \beta^3. 
    \end{align*}
    Therefore, the lower bound
    \begin{align} \label{EtaUpperBound2}
        s > \frac{1}{\rho \left[ 2 \left( \frac{\max \left\{ \| \nabla_{xx}^2L \|^2 , \| \nabla_{yy}^2L \|^2 \right\}}{\rho \beta} \right)^3 - 1 \right]}  .
    \end{align} is a sufficient to guarantee the inequality condition on the left-hand side of (\ref{ConditionsInTheorem}) is satisfied.
    
    The reader notes all at once that (\ref{EtaUpperBound1}) and (\ref{EtaUpperBound2}) are two \textbf{explicit} lower bounds on $s$ illustrating how small could one select the step-size while ensuring the conditions of the Theorem, nonnegative interaction dominance and the inequality condition on the left-hand side of (\ref{ConditionsInTheorem}) are satisfied. We have been conservative to preserve the implicit condition in the main theorem to preserve generality as much as is achievable.
\end{Rem}

\begin{Rem}
    One observes that damped EGM has a slower convergence rate $1 - \frac{2 \lambda}{\frac{1}{s \cdot \alpha(s)} + 1} + \frac{\lambda^2}{\min\left\{ 1 , \left( \frac{1}{s \rho} - 1 \right)^2 \right\}} + \lambda s^3 \beta^3$ than damped PPM introduced in \cite{PPM_Landscape} which converges at a rate of $1 - \frac{2 \lambda}{\frac{1}{s \alpha(s)} + 1} + \frac{\lambda^2}{\min\left\{ 1 , \left( \frac{1}{s \rho} - 1 \right)^2 \right\}}$. This difference aligns with the perspective of EGM approximating the proximal step via two cheaper gradient descent-ascent steps with accuracy depending on the smoothness of the objective function $\beta$. 
\end{Rem}

\begin{figure} 
    \begin{subfigure}{.5\textwidth}
        \centering
        \includegraphics[width=0.8\linewidth]{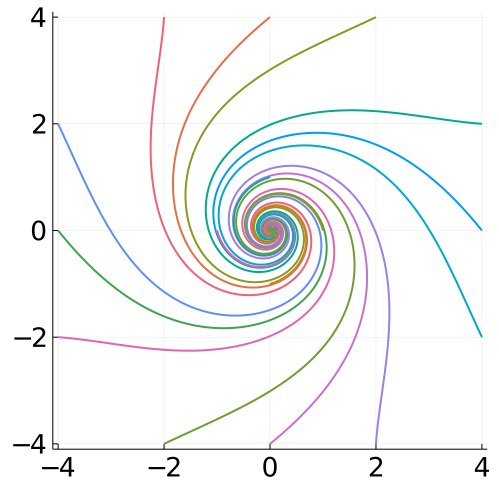}
	    \caption{Convergence of damped EGM}
	    \label{fig:Nonconvex-NonconcaveBen1}
    \end{subfigure}%
    \begin{subfigure}{.5\textwidth}
        \centering
        \includegraphics[width=0.8\linewidth]{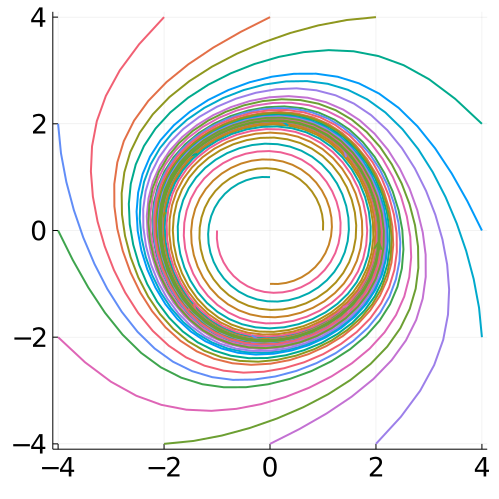}
	    \caption{Cycling of vanilla EGM}
	    \label{fig:Nonconvex-NonconcaveBen2}
    \end{subfigure}%
    \caption{The convergence of damped EGM to the unique stationary point of the nonconvex-nonconcave problem (\ref{BenNonconvexProblem}) with $\bar{A} = 100$ from any starting point within the box $[-4 , 4] \times [-4 , 4]$ and the cycling of EGM.}
    \label{fig:Nonconvex-NonconcaveBen}
\end{figure}

\begin{Rem}
    Our main result also recovers the setting and results of \cite{ZhangNon-convexSmoothGames}. In particular, Theorem $5.14$ in \cite{ZhangNon-convexSmoothGames} asserts that one does not lose convergence by shrinking the damping parameter. This fact follows as well from our theorem, which additionally quantifies the rate of convergence rate one would see as the damping parameter shrinks.
\end{Rem}

\subsection{Proof of Theorem \ref{GeneralizedEGMConvergenceTheorem}} \label{MainResultProof}
Having noticed the remarks and ramifications of our main theorem, we now furnish a proof for the theorem.

\begin{proof}[Proof of Theorem~\ref{GeneralizedEGMConvergenceTheorem}]
    Given any iterate $z_k$, let us first find the exact upper bound to $\| \tilde{z}_{k +1} - z_{k + 1} \|$ where, as noted before, $\tilde{z}_{k + 1}$ is the update of damped PPM and $z_{k + 1}$ is the update of the damped EGM. One notes that 
\begin{align*}
    z_{k + 1} = z_k - \lambda s F\left( z_k - s F(z_k) \right)\ ,
\end{align*}
and
\begin{align*}
    \tilde{z}_{k + 1} & = (1 - \lambda)z_k + \lambda z_k^+ = (1 - \lambda)z_k + \lambda \left[ z_k - s F\left( z_k^+ \right) \right]  = z - \lambda s F\left( z_k^+ \right)\ .
\end{align*}
One also would observe that $z_k^+ = z_k - s F\left( z_k^+ \right)$, whence $\left( I + s F \right) \left( z_k^+ \right) = z_k$. We need the following lemma in order for further proceeding with the proof.

\begin{Lem} \label{Invertibility}
    The operator $I + s F:\mathbb{R}^{n+m} \rightarrow \mathbb{R}^{n + m}$ for any $0 < s < \frac{1}{\rho}$ is invertible. 
\end{Lem}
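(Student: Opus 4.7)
The plan is to show that $I + sF$ is bijective on $\mathbb{R}^{n+m}$ by establishing strong monotonicity (for injectivity) together with a preimage construction through the already-introduced saddle envelope (for surjectivity). Throughout, write $T := I + sF$.

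For injectivity, I would compute the Jacobian of $F$, namely
\[
JF(z) = \begin{pmatrix} \nabla_{xx}^2 L(z) & \nabla_{xy}^2 L(z) \\ -\nabla_{yx}^2 L(z) & -\nabla_{yy}^2 L(z) \end{pmatrix},
\]
and observe that its symmetric part is block-diagonal with blocks $\nabla_{xx}^2 L(z)$ and $-\nabla_{yy}^2 L(z)$, because $\nabla_{xy}^2 L(z) = (\nabla_{yx}^2 L(z))^T$ for a $\mathcal{C}^2$ function. By the $\rho$-weak-convexity-concavity assumption, both diagonal blocks are $\succeq -\rho I$, so the symmetric part of $JF(z)$ dominates $-\rho I$. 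Integrating along line segments yields the $-\rho$-monotonicity of $F$, i.e.\ $(F(z)-F(z'))^T(z-z') \geq -\rho\|z-z'\|^2$. Combining with the identity term gives
\[
(T(z)-T(z'))^T(z-z') \geq (1 - s\rho)\|z-z'\|^2,
\]
and since $s < 1/\rho$, the coefficient is strictly positive; hence $T$ is strongly monotone and in particular injective.

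For surjectivity, I would exploit the saddle envelope machinery already set up in Section~\ref{PreliminarySection}. Fix an arbitrary target $w = (x,y) \in \mathbb{R}^{n+m}$. Because $s < 1/\rho$, the auxiliary function $M(u,v) := L(u,v) + \frac{1}{2s}\|u-x\|^2 - \frac{1}{2s}\|v-y\|^2$ is $(1/s - \rho)$-strongly-convex-strongly-concave (this was already invoked to make $\mathcal{L}_s$ well-defined), so it admits a unique saddle point $(u^*,v^*) = \prox_{s.L}(x,y)$. Writing the first-order optimality conditions gives $\nabla_u L(u^*,v^*) + (u^* - x)/s = 0$ and $-\nabla_v L(u^*,v^*) + (v^* - y)/s = 0$, which rearranges precisely to $(u^*,v^*) + s F(u^*,v^*) = (x,y)$, that is, $T(u^*,v^*) = w$. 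Thus $T$ is onto, and combined with injectivity this establishes that $T = I + sF$ is a bijection.

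The main obstacle is really only bookkeeping: confirming that the weak-convexity-concavity translates to monotonicity of $F$ in the correct ``signed'' sense via the symmetric-part argument, and then recognizing that the proximal/saddle-envelope problem is exactly the resolvent equation $T(z) = w$. Both pieces are essentially already in the paper, so the proof should be quite short, with the strong-monotonicity lower bound $(1-s\rho)\|z-z'\|^2$ doing the heavy lifting.
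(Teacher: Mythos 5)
Your proof is correct, but it takes a genuinely different route from the paper's. The paper argues pointwise on the Jacobian: it factors $\det\left( I + s \nabla F(z) \right)$ via the Schur complement, uses weak convexity--concavity to conclude the determinant never vanishes, and then invokes the inverse function theorem. You instead get injectivity from the $(1 - s\rho)$-strong monotonicity of $I + sF$ (obtained by integrating the symmetric part of $\nabla F$, which is block-diagonal because $\nabla_{xy}^2 L = (\nabla_{yx}^2 L)^T$ and hence bounded below by $-\rho I$), and surjectivity by recognizing $(I+sF)(z) = w$ as exactly the first-order optimality system of the strongly-convex--strongly-concave prox subproblem defining the saddle envelope. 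Two observations. First, your route delivers strictly more than the paper's proof as written: an everywhere-nonsingular Jacobian plus the inverse function theorem only yields a local diffeomorphism, not global injectivity or surjectivity (that step would need a global inverse function theorem such as Hadamard's, or precisely the monotonicity/resolvent argument you give), so your argument closes a genuine gap. Second, the one fact you lean on that deserves flagging is the existence of a saddle point of the strongly-convex--strongly-concave auxiliary function $M$: this is essentially equivalent to the surjectivity you are proving, and the paper only asserts it (to make $\mathcal{L}_s$ well-defined in Section~\ref{PreliminarySection}) without proof. Your appeal to it is legitimate within the paper's framework, but a fully self-contained version would either cite Minty's theorem for continuous strongly monotone operators or note that strong monotonicity plus Lipschitz continuity of $I+sF$ gives surjectivity directly via a contraction argument. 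The only thing the paper's approach buys that yours does not address is the smoothness of the inverse, which is used later in the Taylor expansion of $(I+sF)^{-1}$, though the lemma as stated claims only invertibility.
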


\begin{proof}
    Since $L$ is $\beta$-smooth, the operator $(I + s F)(.)$ is continuous. On the other hand, at any $z \in \mathbb{R}^{n + m}$ we have 
    \begin{align}
        \left\lvert I + s \nabla F(z) \right\lvert & = \left\lvert \begin{bmatrix}
    I + s \nabla_{xx}^2L(z) & \nabla_{xy}^2L(z) \\
    - \nabla_{xy}^2L(z)^T & I - s \nabla_{yy}^2L(z)
    \end{bmatrix} \right\lvert & \nonumber \\
    & = \left\lvert I + s \nabla_{xx}^2L(z) \right\lvert. \left\lvert I - s \nabla_{yy}^2L(z) + \nabla_{xy}^2L(z)^T(I + s \nabla_{xx}^2L(z))^{-1}\nabla_{xy}^2L(z) \right\lvert \ , \nonumber
    \end{align}
    where the second equality follows from Schur complement. Moreover, by hypothesis, $I + s \nabla_{xx}^2L(z) \succ 0$, $I - s \nabla_{yy}^2L(z) \succ 0$, and \[ \nabla_{xy}^2L(z)^T(I + s \nabla_{xx}^2L(z))^{-1}\nabla_{xy}^2L(z) \succeq 0\ , \]whence the Jacobian has always nonzero determinant, i.e. $\left\lvert I + s \nabla F(z) \right\lvert \neq 0$. Therefore, by inverse function Theorem, the operator $(I + s F)(.)$ is invertible with a continuously differentiable inverse.
\end{proof}

One observes that the proximal step in (\ref{DampedPPM}) is equivalent to that of \cite{PPM_Landscape} with $\eta = \frac{1}{s}$. For any $z$, the inverse $(I + s F)^{-1}(z)$ of the operator $I + s F$ is given by
\begin{align} \label{InversePPMOperator}
    (I + s F)^{-1}(z) = z - s F(z) + s^2 \nabla F(z) F(z) + o\left( s^2 \right) \ .
\end{align}
For details, one may refer to Appendix B of \cite{Lu:O(sr)}. Therefore, we can write
\begin{align} \label{IntegralRemainderBound}
    \| \tilde{z}_{k + 1} - z_{k + 1} \| & = \lambda s \| F\left( (I + s F)^{-1}(z_k) \right) - F\left( z_k - s F(z_k) \right) \| & \nonumber \\
    & \leq \lambda s \beta \left\| \frac{1}{1!} \int_{0}^{s} \frac{\partial^2}{\partial \tau^2}(I + \tau F)^{-1}(z_k)\bigg|_{\tau = t} ~  (s - t)  ~ dt \right\| & \nonumber \\
    & \leq \lambda s \beta  \int_{0}^{s} \left\| \frac{\partial^2}{\partial \tau^2}(I + \tau F)^{-1}(z_k)\bigg|_{\tau = t}\right\| (s - t) ~ dt \ ,
\end{align}
where $\beta$-smoothness of $L$, Taylor expansion with the integral form of the remainder of the inverse operator $(I + s F)^{-1}(z)$, and Cauchy-Schwartz inequality are invoked. 

We now turn to evaluate $\frac{\partial^2}{\partial \tau^2}(I + \tau F)^{-1}(z_k)\bigg|_{\tau = t}$. For that matter, first note that by Appendix B in \cite{Lu:O(sr)} and by Lemma \ref{Invertibility}, we have for any $t < \frac{1}{\rho}$
\begin{align*} 
    (I + tF)^{-1}(z) = z - tF(z) + t^2 \nabla F(z) F(z) + t^3 \left( - (\nabla F(z))^2F(z) - \frac{1}{2} \nabla^2F(z) \otimes_2 F(z) \right) + o\left( t^3 \right).
\end{align*}
where $\otimes_2$ refers to the $2$-times tensor product of a $2$-dimensional tensor with a vector. Now for any $z$, let $h_z : \mathbb{R} \rightarrow \mathbb{R}^{n + m}$ be the mapping $h_z: t \mapsto (I + tF)^{-1}(z)$. By Lemma \ref{Invertibility}, for any $t < \frac{1}{\rho}$ the derivative,
\begin{align*}
    h_z^{'}(t) & = \frac{\partial}{\partial t}(I + t F)^{-1}(z) & \nonumber \\
    & = - F(z) + t(2 \nabla F(z) F(z)) + t^2 \left( - 3(\nabla F(z))^2F(z) - \frac{3}{2} \nabla^2F(z) \otimes_2 F(z) \right)+ o\left( t^2 \right) &
\end{align*}
is continuous. Invoking, in addition, the mean value Theorem, for any $t < \frac{1}{\rho}$, one would have a well-defined mapping $f:(0 , t] \rightarrow (0 , t]$, $f: t \mapsto f(t) \in (0 , t)$ such that
\begin{align*}
    h_z^{'}(t) = - F(z) + 2 f(t) \nabla F(z) F(z)\ .
\end{align*}
This mapping $f$ is continuous in $(0 , \frac{1}{\rho})$, because otherwise there would exist an $\epsilon > 0$, $t_0 \in (0 , \frac{1}{\rho})$ such that for any $\delta > 0$,
\begin{align*}
    \inf_{t \in \mathbb{B}(t_0 , \delta) \setminus \{ t_0 \}} |h^{'}_z(t) - h^{'}_z(t_0)| = \inf_{t \in \mathbb{B}(t_0 , \delta) \setminus \{ t_0 \}} |2 \nabla F(z) F(z)|\cdot|f(t) - f(t_0)| = |2 \nabla F(z) F(z)|\cdot\epsilon \ ,
\end{align*}
which contradicts the continuity of $h_z^{'}(t)$.

Hence, for any $t < 1/\rho$, there exists a sequence $\delta_n \downarrow 0$ such that
\begin{align*}
    h_z^{''}(t) & = \limsup_{n \rightarrow \infty} \frac{h_z^{'}(t + \delta_n) - h_z^{'}(t)}{\delta_n}  = 2\nabla F(z) F(z) \limsup_{n \rightarrow \infty} \frac{f(t + \delta_n) - f(t)}{\delta_n} \ ,
\end{align*}
with $\limsup_{n \rightarrow \infty} \frac{f(t + \delta_n) - h_z^{'}(t)}{\delta_n} \leq f(t) \leq 1$ by construction. Therefore, we can bound (\ref{IntegralRemainderBound}) as follows
\begin{align*}
    \| \tilde{z}_{k + 1} - z_{k + 1} \| & \leq \lambda s^3 \beta \| \nabla F(z_k) \| . \| F(z_k) \| \leq \lambda s^3 \beta^3 \| z_k - z^* \| 
\end{align*}
by using $F(z^*) = 0$.

Let $c := 1 - \frac{2 \lambda}{\frac{1}{s \cdot \alpha(s)} + 1} + \frac{\lambda^2}{\min\left\{ 1 , \left( \frac{1}{s \rho} - 1 \right)^2 \right\}} \in (0 , 1)$ be the shrinking constant of the distance to the unique stationary point of $L$ on one iteration of damped PPM as in \cite{PPM_Landscape}. Given the upper bound for $\| \tilde{z}_{k + 1} - z_{k + 1} \|$ just furnished, one has
\begin{align*}
    \| z_{k + 1} - z^* \|^2 & = \| z_{k + 1} - \tilde{z}_{k + 1} \|^2 + \| \tilde{z}_{k + 1} - z^* \|^2 + 2(z_{k + 1} - \tilde{z}_{k + 1})^T(\tilde{z}_{k + 1} - z^*) & \nonumber \\
    & \leq \left( \lambda^2 \beta^6 s^6 + c^2 + 2 \lambda c s^3 \beta^3 \right) \| z_k - z^* \|^2 & \nonumber \\
    & = \left( \lambda \beta^3 s^3 + c \right)^2 \| z_k - z^* \|^2 \ .
\end{align*}
Should one select a value of $s$ and $\lambda$ such that 
\begin{align*}
    \lambda \beta^3 s^3 + c < 1
\end{align*}
linear convergence can be claimed. One would attain convergence if $\lambda$ is chosen small enough, that is,
\begin{align} \label{BoundOnLambda}
    \lambda < \min\left\{ 1 , \left( \frac{1}{s \rho} - 1 \right)^2 \right\} \left[ \frac{2}{\frac{1}{s \alpha(s)} + 1} - s^3 \beta^3 \right] \ .
\end{align}
One notices that (\ref{BoundOnLambda}) indicates an implicit lower bound on $s$, that is already needed to be smaller than $\frac{1}{\beta}$. Given $\alpha(s) > 0$, one would need $s$ to be small enough to make the upper bound on $\lambda$ positive (i.e. so that for some $\lambda \in (0 , 1)$ convergence can be attained). 
\end{proof}

\subsection{Tightness of Nonnegative Interaction Dominance} \label{Examples}

We observed that damped EGM converges to the saddle point of an objective function in problem \eqref{generalproblem} if $L$ is nonnegative interaction dominance in both variables $x$ and $y$. This interaction dominance does not hold if $s$ is too small. This is an interesting observation which is in contradiction with classic minimization problems where every step-size smaller than the reciprocal of the smoothing modulus is acceptable for convergence.  An interesting question, therefore, is whether there is a class of nonconvex-nonconcave problems for which it is ``necessary'' to have interaction dominance in both variables for the convergence to the saddle point of \eqref{generalproblem}. We answer this question in the affirmative by exploring the class of nonconvex-nonconcave quadratic saddle problems and showing that the nonnegative interaction dominance is necessary for convergence so that our main result in Theorem \ref{GeneralizedEGMConvergenceTheorem} is tight. This would affirm that nonconvex-nonconcave minimax problems are in contrast to classical optimization problems where choosing any step-size smaller than the inverse of the Hessian norm would suffice to guarantee convergence.

We show that a slight nonconvexity-nonconcavity in a given quadratic saddle problem would necessitate the nonnegative, in fact, even positive, interaction dominance to hold for convergence to occur. More precisely, consider the following nonconvex-nonconcave quadratic problem with interaction $\bar{A}$\footnote{For simplicity, we are considering equal dimensions for both $x$ and $y$.},
\begin{align} \label{Nonconvex-NonconcaveQuadraticProblem}
    \min_{x \in \mathbb{R}^n} \max_{y \in \mathbb{R}^n} L(x , y) := -\frac{\rho}{2} x^Tx + \bar{A} x^Ty + \frac{\rho}{2}y^Ty\ .
\end{align}

It is observed in \cite{PPM_Landscape} that for the very specific problem of quadratic minimax optimization problem (\ref{Nonconvex-NonconcaveQuadraticProblem}), interaction dominance is a necessary condition for the convergence of damped PPM on that problem. Since damped EGM and damped PPM only differ in terms concerning derivatives of higher order, it is natural to think that damped EGM too accedes the positive interaction dominance as a necessary condition in converging to the solution of (\ref{Nonconvex-NonconcaveQuadraticProblem}). We show that this, indeed, is the case. One can observe this by plugging the objective function of problem (\ref{Nonconvex-NonconcaveQuadraticProblem}) in the update iterations (\ref{generalizedEGMUpdateX})-(\ref{generalizedEGMUpdateY}) of damped EGM. More precisely, first notice that the largest $\alpha$ that satisfies the interaction dominance conditions (\ref{InteractionDominanceX})-(\ref{InteractionDominanceY}) for the problem (\ref{Nonconvex-NonconcaveQuadraticProblem}) is $\alpha = -\rho + \frac{s \cdot \bar{A}^2}{1 - s \cdot \rho}$. The update on $x$ is given by
\begin{align*}
    x_{k + 1} & = x_k - \lambda s \nabla_x L(z_k) + \lambda s^2 \nabla_{xx}^2L(z_k) \nabla_x L(z_k) - \lambda s^2 \nabla_{xy}^2L(z_k) \nabla_y L(z_k) & \\
    & = \left( 1 + s \lambda \rho + s^2 \lambda \rho^2 - \lambda s^2 \bar{A}^2 \right) x_k - \left( s \lambda \bar{A} + 2 s^2 \lambda \rho \bar{A} \right) y_k \ .
\end{align*}
Applying similar calculations for $y_{k + 1}$ and stacking the equations yields,
\begin{align} \label{MatrixOfGeneralizedEGM}
    \begin{bmatrix}
    x_{k + 1} \\
    y_{k + 1}
    \end{bmatrix} = \begin{bmatrix}
    \Theta I & -\Sigma I \\
    \Sigma I & \Theta I
    \end{bmatrix}\begin{bmatrix}
    x_k \\
    y_k
    \end{bmatrix} \ ,
\end{align}
where $\Theta := \left( 1 + s \lambda \rho + s^2 \lambda \rho^2 - \lambda s^2 \bar{A}^2 \right)$ and $\Sigma := s \lambda \bar{A} \left( 1 + 2 s \rho \right)$.  Taking the norm of both sides in (\ref{MatrixOfGeneralizedEGM}) and simplifying, one gets
\begin{align*}
    \| z_{k + 1} \|^2 = (\Theta^2 + \Sigma^2) \| z_k \|^2 \ .
\end{align*}

Now observing the unique stationary point of the objective function of the quadratic problem (\ref{Nonconvex-NonconcaveQuadraticProblem}) is $z = 0$, it follows that damped EGM converges if and only if $\Theta^2 + \Sigma^2 < 1$ holds. 

Suppose now that in problem (\ref{Nonconvex-NonconcaveQuadraticProblem}) we have $\bar{A} = 10$ and $\rho > 0$ a very small positive value. In other words, the problem is nonconvex-nonconcave with a small negative curvature in partial Hessian $\nabla_{xx}^2L(z)$ and a small positive curvature in partial Hessian $\nabla_{yy}^2L(z)$. Therefore, one can write the convergence condition of EGM on problem (\ref{Nonconvex-NonconcaveQuadraticProblem}) as follows
\begin{align*}
    \Theta^2 + \Sigma^2 = \left( 1 + s \lambda \rho + \lambda s^2 (\rho^2 - 100) \right)^2 + \left( 10 \lambda s \left( 1 + 2 s \rho \right) \right)^2 < 1  .
\end{align*}
It is easy to notice that one must have $s \lambda \rho + \lambda s^2 (\rho^2 - 100) < 0$ for convergence because otherwise the term $\Theta > 1$ so that $\Theta^2 + \Sigma^2 > 1$, and by (\ref{MatrixOfGeneralizedEGM}) the iterations diverge away from the origin. This implies \[ s > \frac{\rho}{100 - \rho^2}\ . \]Since $\alpha(s) = -\rho + \frac{100 s}{1 - s \rho}$ the convergence condition on $\alpha(s)$ simplifies to 
\begin{align} \label{QuadraticAlphaPositive}
    \alpha(s) > \frac{\rho^2 (\rho + 1)}{100 - 2 \rho^2}\ .
\end{align}
The condition (\ref{QuadraticAlphaPositive}) implies that even a small nonconvexity-nonconcavity in the problem would necessitate the value of $\alpha(s)$ to be positive in order to attain convergence. Hence even for simple quandratic minimax optimization, our guarantees based on the positive interaction dominance condition are essentially tight, as simple counter examples exist just beyond this regime.


\section{Conclusion}
In this paper, we showed that EGM with scaled steps also called damped EGM approximates damped PPM. This algorithm is simpler to implement and computationally more efficient than damped PPM. We showed that damped EGM linearly converges to the saddle point of any nonconvex-nonconcave minimax problems that satisfy the nonnegative interaction dominance condition. We also furnished an example displaying the approximate tightness of nonnegative interaction dominance condition.

\bibliographystyle{amsplain}
\bibliography{refs}

\end{document}